   \def\CC{\mathbb{C}}
   \def\HH{\mathbb{H}}
    \def\DD{\mathbb{D}}
    \def\NN{\mathbb{N}}
    \def\RR{\mathbb{R}}
    \def\Re{\mathfrak{R}}
    \def\fe{f_e}
    \def\fez{f_{e,0}}
    \newtheorem{Proposition}{Proposition}
\newtheorem{Theorem}[Proposition]{Theorem}
\newtheorem{Lemma}[Proposition]{Lemma}
\newtheorem{Definition}[Proposition]{Definition}
\newtheorem{Remark}[Proposition]{Remark}
\def\z{\noindent}  
\def\be{\begin{equation}}
\def\ee{\end{equation}}
\def\ge{\geqslant}
\def\le{\leqslant}
\def\bd{\begin{Definition}}
\def\ed{\end{Definition}}
\def\bt{\begin{Theorem}}
\def\et{\end{Theorem}}
\def\calC{\mathcal{C}}
 \def\bt{\begin{Remark}}
\def\et{\end{Remark}}
\def\epsilon{\varepsilon}
\def\bel{\begin{equation}\label}
\def\ee{\end{equation}}
\def\Ei\text{Ei}
\def\phi{\varphi}
\title{On the pointwise existence of Cauchy $\rm{P.V.}$ integrals}
\author{
NICHOLAS CASTILLO, OVIDIU COSTIN , KRITI SEHGAL
 }
\date{\today}
\begin{document}

\maketitle

\section{Abstract}
The Riemann-Hilbert (RH) approach, whose origins can be traced back to Riemann's PhD thesis, is well known to be far-reaching. It provides a general framework for expressing solutions of integrable problems such as ODEs or PDEs.  Its generalization concerning monodromy groups of Fuchsian systems is one of Hilbert's 23 problems,  cf. \cite{Both}.

In this paper we extend a basic result in scalar RH theory, the Sokhotski-Plemelj formula.  Classically, this formula is derived under assumptions of H\"older continuity, although it also holds true under weaker, Dini,  continuity conditions. Dini continuity is still too restrictive. We prove the  Sokhotski-Plemelj formula under weaker assumptions, namely continuity at a point and an $L^1$ condition.

Furthermore, we provide sufficient conditions for the existence of the Cauchy $\rm{P.V.}$ integral which are in a precise sense also necessary: weakening them runs into obstructions of a mathematical foundations nature.

\tableofcontents


\section{Preliminaries}
\subsection*{Notations:} 
\begin{itemize}
    \item $\HH:=\{z\in\CC:\Im z>0\}$ is the upper half plane, where $\Im$ denotes the imaginary part of $z$.
    \item $\mathbbm{1}_E$ is the indicator function supported by the set $E$.
      \item   For a function $f$ defined on a symmetric interval around $0$,       $f_o$ and $f_e$ denote its odd and even parts, respectively: $2f_o(t)=f(t)-f(-t)$, $f_e(t)=f(t)-f_o(t)$.
\end{itemize}

We remind the reader of the definition of the Cauchy ${\rm{P.V.}}$ integral on an interval. 
\begin{Definition}[Cauchy principal value integral]

Let $f$ be defined on $[a,b]$ and let $\tau \in (a,b)$. The Cauchy principal value integral of  $f$ based at $\tau$ is defined as 
\begin{equation}\label{PVdef}
    {\rm{P.V.}}\int_{a}^b\frac{f(t)}{t-\tau}dt\coloneqq \lim_{ \delta\to 0^+}\left(\int_{a}^{\tau-
    \delta}\frac{f(t)}{t-\tau}dt+\int_{\tau+ \delta}^{b}\frac{f(t)}{t-\tau}dt\right)
\end{equation}
when the limit exists.
\end{Definition}

This definition extends naturally to curves, see \cite{AblFok}.  We normalize the problem so that the point of interest is $\tau = 0$.
Some well known  properties of the Cauchy ${\rm{P.V.}}$ integral that we use below are \cite{Fol}:
\begin{enumerate}
\item   If $A$ is any compact subset of $\RR$ which does not contain a neighborhood of zero and $f\in L^1(\RR)$ then

\begin{equation}
    {\rm{P.V.}}\int_A\frac{f(t)}{t}dt=\int_A\frac{f(t)}{t}dt.
\end{equation}

\item  If $A$ and $B$ are compact sets containing a neighborhood of $0$, $A \subset B$, and $f\in L^1(B)$, then
\begin{equation}\label{PVEquiv}
    {\rm{P.V.}}\int_B \frac{f(t)}{t}dt=\int\limits_{B\setminus A}\frac{f(t)}{t}dt+{\rm{P.V.}}\int_A \frac{f(t)}{t}dt.
\end{equation}
\end{enumerate}
Except for positivity, P.V. integrals behave like usual integrals.

\begin{Definition}{\rm{(Nontangential limits; see \cite{Conway})}}
\label{NonTang}
    Given a sequence $\{z_n\}_{n\in\NN}$ in $\HH$, we say  $z_n$ converges non-tangentially to $\tau \in\RR$ if there exists $\epsilon>0$ so that $\arg (z_n-\tau)\in (\epsilon,\pi-\epsilon)$ for all $n$ and $z_n\to \tau$. 
\end{Definition}

Our convention is that the range of $\arg$ is $[0,2\pi)$.


\subsection{Setting of Riemann-Hilbert}
A simple special case of  the classical Riemann Hilbert problem  is the following. 
 Given a smooth  simple curve $\mathcal{C}$ and a function $f$ integrable along the curve,  construct a complex analytic function $\Phi$ defined on $\CC\setminus \mathcal{C}$ whose jump across the curve $\mathcal{C}$ is $f$, and which decays at infinity.   More precisely,
\begin{equation}\label{eq:5}
    \Phi^+(\tau)-\Phi^-(\tau)=f(\tau), \quad \text{for all} \quad  \tau\in \mathcal{C}
\end{equation}
where $\displaystyle \Phi^\pm(\tau)$ 
is the limit of $\Phi(z_n)$ as $z_n \to \tau \in \mathcal{C}$ and  $\{z_n\}_{n\in\NN}\subset \mathcal{D}^\pm $, and $\displaystyle \lim_{\substack{|z|\to \infty\\z\not\in \mathcal{C} }} \Phi(z)=0$.  Here $\mathcal{D}^+\,\,(\mathcal{D}^-, \text{resp.})$ denotes the points to the left (right, resp.) of the curve according to its parameterization.  For smooth curves, it is easy to reduce this problem to that in which the curve is a line segment.   In this work we take  $\mathcal{C}=[a,b]$,  $a,b\in\RR$ \footnote{ The extension to smooth curves requires some further estimates, see section \ref{Plem1}.} (in which case $\mathcal{D}^+\subset \HH$)  and $f\in L^1([a,b])$. The solution to \eqref{eq:5} is \cite{AblFok}:
 \begin{equation}\label{PhiDef}
    \Phi\left(z\right)=\frac{1}{2\pi i}\int_a^b\frac{f(t)}{t-z}dt.
  \end{equation}
 The Sokhotski-Plemelj formula additionally provides the two lateral limits of $\Phi$:
\begin{equation}\label{eq:lat-lim}
    \lim_{\substack{ z_n\to \tau \\z_n\in \mathcal{D}^{\pm}}} \Phi(z_n)=\pm\frac{1}{2}f(\tau) +\frac{1}{2 \pi i } {\rm{P.V.}}\int_a^b \frac{f(t)}{t-\tau}dt,
\end{equation}
The usual assumption under which this formula is derived is that of H\"older continuity of $f$, although in similar settings (the question of harmonic conjugates) the same conclusion is reached under the weaker condition of Dini continuity of $f$ \cite{BAF}.    We show in Theorem \ref{Plemeljthm} that   even weaker conditions suffice for \eqref{eq:lat-lim} to hold at the point $\tau$. These are: continuity of $f$ at $\tau$ and, denoting  $g(x):=f(x+\tau)$, $\displaystyle  \frac{g_o(t)}{t} \in L^1([a-\tau,b-\tau])$. (It is easy to see that Dini continuity implies the conditions above but not the other way around.)

 We then show, conversely, that  continuity of $f$ at $\tau$ is needed, and that  failure of the condition  on  $g_o$ above creates obstructions stemming from mathematical foundations to a well-defined  PV.

\section{Results:  Sufficient Conditions}
In the following, $a<\tau<b$,  $f:[a,b]\to \CC$ is in $L^1$, and $g$ is defined on $[a-\tau,b-\tau]$ by  $g(x):=f(x+\tau)$.
\begin{Proposition}
    \label{necessaryandsufficient} 
  \label{Suff} Assume that $g_o(t)/t\in L^1\left([a-\tau,b-\tau]\right)$. Then, the Cauchy principal value integral 
        \begin{equation}\label{PVexist}
            {\rm{P.V.}}\int_{a}^b\frac{f(t)}{t - \tau}dt
        \end{equation}
        exists. 
    

     \end{Proposition}
{\bf Note.} In the proofs below  we  arrange by changes of variables that that $\tau = 0$, $a<-1$ and $b>1$.
\begin{proof}
This is straightforward:  By symmetry, $\displaystyle {\rm{P.V.}}\int_{-1}^1\frac{f_e(t)}{t}dt = \lim_{\delta\to 0^+}\left( \int_{-1}^{-\delta}\frac{f_e(t)}{t}dt + \int_{\delta}^{1}\frac{f_e(t)}{t}dt \right) = 0$. 
 Also, since $f_o(t)/t\in L^1\left([-1,1]\right)$, we simply have  $\displaystyle {\rm{P.V.}} \int_{-1}^1\frac{f_o(t)}{t}dt= \int_{-1}^1\frac{f_o(t)}{t}dt$. Therefore,
\begin{equation*}
\displaystyle {\rm{P.V.}} \int_{-1}^1\frac{f(t)}{t}dt = \displaystyle {\rm{P.V.}} \int_{-1}^1\frac{f_e(t)}{t}dt + \displaystyle {\rm{P.V.}} \int_{-1}^1\frac{f_o(t)}{t}dt = \int_{-1}^1\frac{f_o(t)}{t}dt
\end{equation*}
 and the conclusion follows.
\end{proof}

\begin{Theorem}[Sokhotski-Plemelj theorem]\label{Plemeljthm} Assume that (a) the conditions of Proposition \ref{necessaryandsufficient} hold, (b)  $f$ is continuous at $\tau$, (c)  the sequence $\{z_n\}_n$ is contained either in $\mathcal{D}^+$ or in $\mathcal{D}^-$, and (d) $\{z_n\}_n$  converges non-tangentially to $\tau$. Then, we have
        \begin{equation}\label{ThmPlem}
             \lim_{\substack{ z_n\to \tau\\z_n\in \mathcal{D}^{\pm}}} \Phi\left(z_n\right)=\pm \frac{1}{2}f(\tau)+\frac{1}{2\pi i}{\rm{P.V.}}\int_{a}^b\frac{f(t)}{t-\tau}dt.
        \end{equation}
  \end{Theorem} 
  
  \begin{proof} 

    {\bf Step 1: Reduction to the case  $\tau=0$, $[a,b]=[-1,1]$.}  As noted after Proposition \ref{necessaryandsufficient} we arrange that $\tau=0$ and $(a,b)\supset[-1,1]$. The results we are aiming for are local and it is routine to further reduce the analysis to the case when the interval equals $[-1,1]$.  For, letting  $I^c=[a,b]\setminus[-1,1]$ and choosing  $n_0$ so that, for $n>n_0$  $|z_n|<\tfrac12$, we have, for all $t\in I^c$ and $n>n_0$,  $|f(t)||t-z_n|^{-1}\le 2|f(t)|$. Dominated convergence implies that
$$\lim_{z_n\to 0}\int_{I^c}\frac{f(t)}{t-z_n}dt=\int_{I^c}\frac{f(t)}{t}dt\left(={\rm P.V.} \int_{I^c}\frac{f(t)}{t}dt\right)  $$
    
{\bf  Step 2: The limits on the interval $[-1,1]$.}   
For $f_o$ we have
\begin{equation}\label{OddPart}
    \lim_{n\to\infty}\int_{-1}^1\frac{f_o(t)}{t-z_n}dt 
    \end{equation}
and, by symmetry,
\begin{equation}
    \int_{-1}^1\frac{f_o(t)}{t-z_n}dt=\int_{-1}^1\frac{f_o(t)}{t+z_n}dt.
\end{equation}
It follows that
\begin{multline}\label{OddInt}
    \int_{-1}^1\frac{f_o(t)}{t-z_n}dt=\frac{1}{2}\left(\int_{-1}^1\frac{f_o(t)}{t+z_n}dt+ \int_{-1}^1\frac{f_o(t)}{t-z_n}dt\right)\\
    =\int_{-1}^1\frac{tf_o(t)}{t^2-z_n^2}dt=2\int_0^1\frac{tf_o(t)}{t^2-z_n^2}dt=2\int_0^1\frac{g(t)}{1-z_n^2/t^2}dt
  \end{multline}
  where $g(t)=f_o(t)/t$.  Define the closed sector 
\begin{equation}
    S_\epsilon=\{\zeta \in\CC:\arg\zeta\in[2\epsilon,2\pi-2\epsilon]\}.
  \end{equation}
 Simple estimates show that the distance $d(S_\epsilon,1)$ is $\sin(2\epsilon)$.  The assumptions of the theorem and Definition \ref{NonTang} imply $\arg z_n^2\in(2\epsilon,2\pi-2\epsilon)$; hence, for each $z_n$, $\{z_n^2/t^2:t\in(0,1]\}\subset S_{\epsilon}$ and
\begin{equation}\label{SeqEst}
    \left|1-z_n^2/t^2\right|\geq \sin(2\epsilon)\quad \text{for all} \quad t\in(0,1], n\in \NN.
\end{equation}
Hence we have $\left|g(t)/(1-z_n^2/t^2)\right|\leq|g(t)|/\sin(2\epsilon)$.  Since by assumption $g(t)=f_o(t)/t\in L^1\left([-1,1]\right)$ the dominated convergence theorem applies and we get 
\begin{equation}\label{FinalOdd}
    \lim_{n\to\infty}2\int_0^1\frac{g(t)}{1-z_n^2/t^2}dt=2\int_{0}^1g(t)dt=2\int_0^1 \frac{f_o(t)}{t}dt={\rm{P.V.}}\int_{-1}^1\frac{f(t)}{t}dt
  \end{equation}
  The last equality in \eqref{FinalOdd} follows from linearity of $ {\rm{\text{P.V.}}}$ and the fact that $ \displaystyle {\rm{\text{P.V.}}}  \int_{-1}^1 \fe(t) t^{-1}dt=0$.

For the integral involving $f_e$, we write $f_e(t)=f_{e,0}(t)+f_e(0)$.   
\begin{equation}\label{EvenPart}
  \int_{-1}^1\frac{f_e(t)}{t-z_n}dt=\int_{-1}^1\frac{\fez(t)}{t-z_n}dt+\int_{-1}^1\frac{f_e(0)}{t-z_n}dt
    \end{equation}
For the first term on the right side of \eqref{EvenPart} we proceed as for \eqref{OddInt} and we get, using the fact that $f_{e,0}$ is even,
\begin{equation}\label{EvenInt}
    \int_{-1}^1\frac{\fez(t)}{t-z_n}dt = z_n\int_{-1}^1 \frac{\fez(t)}{t^2-z_n^2}dt.
\end{equation}
Since $f_{e,0}$ is continuous at the origin,  there is a $\delta>0$ such that
  \begin{equation}
    \label{eq:boundM}
    \sup_{[-\delta,\delta]}|f|=M_\delta<\infty.
  \end{equation}
  Splitting the domain of integration in \eqref{EvenInt} we get\begin{multline}
    \left|z_n\int_{-1}^1 \frac{\fez(t)}{t^2-z_n^2}dt\right|\leq |z_n|\left( \int_{-1}^{-\delta} \frac{|\fez(t)|}{|t^2-z_n^2|}dt+\int_{-\delta}^{\delta} \frac{|\fez(t)|}{|t^2-z_n^2|}dt+\int_{\delta}^{1} \frac{|\fez(t)|}{|t^2-z_n^2|}dt \right)\\
    =|z_n|\left(\int_{-\delta}^{\delta} \frac{|\fez(t)|}{|t^2-z_n^2|}dt+2\int_{\delta}^{1} \frac{|\fez(t)|}{|t^2-z_n^2|}dt\right).
\end{multline}
We used symmetry to combine the two integrals above.  We first estimate the integral over $[\delta,1]$.   Estimates similar to those leading to  \eqref{SeqEst} imply  $|t^2-z_n^2|\geq \delta^2 \sin(2\epsilon)$ for all  $t\in[\delta,1]$ which, in turn, give
\begin{equation}
    2|z_n|\int_{\delta}^{1} \frac{|\fez(t)|}{|t^2-z_n^2|}dt\leq \frac{2|z_n|}{\delta^2 \sin(2\epsilon)}\int_{\delta}^{1} |\fez(t)|dt\leq 2|z_n|\frac{\|\fez\|_1}{\delta^2 \sin(2\epsilon)}\to 0, \quad n\to \infty
\end{equation}
where $\|\fez\|_1$ is the $L^1$ norm of $f_{e,0}$ on $[-1,1]$.
 We make the change of variables $u=t/|z_n|$ in the integral over $[-\delta,\delta]$ and obtain
\begin{equation} \label{EvenChVar}
    |z_n|\int\limits_{-\delta/|z_n|}^{\delta/|z_n|} \frac{\left|\fez\left(u|z_n|\right)\right|}{\left||z_n|^2u^2-|z_n|^2e^{2i\varphi_n}\right|}|z_n|du=\int\limits_{-\delta/|z_n|}^{\delta/|z_n|} \frac{\left|\fez\left(u|z_n|\right)\right|}{\left|u^2-e^{2i\varphi_n}\right|}du
\end{equation}
where $\varphi_n=\arg z_n$.  We choose  $n$ large enough so that $|z_n|/\delta<1/2$ and split the domain of integration in \eqref{EvenChVar} into $\left[-\delta/|z_n|,-2\right]\cup[-2,2]\cup  \left[2,\delta/|z_n|\right]$.  Again, reasoning as for \eqref{SeqEst} we get
\begin{equation}\label{ThrePtEst}
    \left|u^2-e^{2i\varphi_n}\right|\geq \sin (2\epsilon) \quad \text{for} \quad u\in[-2,2],n\in \NN.
\end{equation}
  The dominated convergence theorem now applies yielding

\begin{equation}
    \lim_{n\to \infty}\int_{-2}^2\frac{\fez\left(u|z_n|\right)}{u^2-e^{2i\varphi_n}}du=0
\end{equation}


 The integrals over the remaining two intervals can be combined due to symmetry:
 \begin{equation}\label{CompInt}
    \int\limits_{-\delta/|z_n|}^{-2} \frac{\left|\fez\left(u|z_n|\right)\right|}{\left|u^2-e^{2i\varphi_n}\right|}du+\int\limits_{2}^{\delta/|z_n|} \frac{\left|\fez\left(u|z_n|\right)\right|}{\left|u^2-e^{2i\varphi_n}\right|}du=2\int\limits_{2}^{\delta/|z_n|} \frac{\left|\fez\left(u|z_n|\right)\right|}{\left|u^2-e^{2i\varphi_n}\right|}du
 \end{equation}
 To apply again dominated convergence, we write 

 \begin{equation}\label{OuterEven}
   \int\limits_{2}^{\delta/|z_n|} \frac{\left|\fez\left(u|z_n|\right)\right|}{\left|u^2-e^{2i\varphi_n}\right|}du=\int\limits_{2}^{\infty} \mathbbm{1}_{[2,\delta/|z_n|]}\frac{\left|\fez\left(u|z_n|\right)\right|}{\left|u^2-e^{2i\varphi_n}\right|}du
\end{equation}
  Furthermore, it is easy to see that $\left|u^2-e^{2i\varphi_n}\right|\geq u^2-1\geq \frac{3}{5}(u^2+1)$ for $|u|\geq2$. Combining this estimate with \eqref{eq:boundM} we get, for $ u\in[2,\infty)$ and $n\in \NN$,
\begin{equation}\label{IntegrandUpBnd}
\mathbbm{1}_{[2,\delta/|z_n|]}\frac{\left|\fez\left(u|z_n|\right)\right|}{\left|u^2-e^{2i\varphi_n}\right|} \leq \frac{5}{3} \frac{M_\delta}{u^2+1} \in L^1([2,\infty)).
\end{equation}
  Clearly, $\displaystyle\mathbbm{1}_{[2,\delta/|z_n|]} \frac{\fez\left(u|z_n|\right)}{u^2-e^{2i\varphi_n}}\to 0$ pointwise on $[2,\infty)$. Thus, the dominated convergence theorem implies that the left side of \eqref{OuterEven} converges to $0$ as $n\to \infty$. Therefore, the first term on the right side of \eqref{EvenPart} tends to $0$ in the limit. \\

 The second term on the right side of \eqref{EvenPart} can be easily calculated by residues.  We deform $[-1,1]$ into the oriented semicircle $\gamma=\{ e^{i\theta}:\theta\in[-\pi,0]\}$ (recall that $\Im z_n>0$ for all $n$). Over the deformed contour, the dominated convergence theorem implies 
\begin{equation}\label{ContDef}
    \lim_{n\to\infty}\int_{\gamma}\frac{f_e(0)}{t-z_n}dt=f_e(0)\int_{\gamma}\frac{1}{t}dt=\pi i f_e(0)
\end{equation}
Combining the above results we see 
\begin{equation}
    \lim_{n\to\infty}\int_{-1}^1\frac{f(t)}{t-z_n}dt=\pi i f(0)+ {\rm{P.V.}}\int_{-1}^1\frac{f(t)}{t}dt
\end{equation}
from  which \eqref{eq:lat-lim} and the equivalence of ${\rm{P.V.}}$ integrals over $[-1,1]$ and $[a,b]$ established by \eqref{PVEquiv} imply \eqref{ThmPlem}.
\end{proof}

\begin{Remark}{\rm 
    Plemelj's formulas are often used along curves in the complex domain, especially when solving Riemann-Hilbert problems \cite{AblFok}. Our results should extend to the case of simple  smooth curves, but this extension needs some further work that we omit here. 
}\end{Remark}

\section{Results: Necessary Conditions}

\subsection{Necessary conditions for ${\rm{P.V.}}$ integral to be defined:
}\label{MaxDom}
For $0<x<1$, we write 
\begin{equation}\label{PVeo}
    {\rm{P.V.}}\int_{0}^1\frac{f_o(t)}{t}dt = {\rm{P.V.}}\int_{0}^x\frac{f_o(t)}{t}dt  + {\rm{P.V.}}\int_{x}^1\frac{f_o(t)}{t}dt. 
\end{equation}
The last term on the right side of \eqref{PVeo} is just an ordinary integral. So, the existence of $\displaystyle{\rm{P.V.}}\int_{0}^1\frac{f_o(t)}{t}dt$ and hence $\displaystyle {\rm{P.V.}}\int_{-1}^1\frac{f(t)}{t}dt$ depends on the existence of $\displaystyle{\rm{P.V.}}\int_{0}^x\frac{f_o(t)}{t}dt$ which takes the form of the operators considered in \cite{CosFreid}. We note some properties of such an integral that stem from its intended use. On its classical functional domain of definition it is linear, nonnegative, it is an antiderivative of $f$,  and for a given $x$ the values of the operator depend only on $f$ restricted to $[0,x]$. The question is that of the possibility of extending its domain of definition, while preserving these fundamental properties,  to a wider space of functions than those such that $t^{-1}f_o\in L^1$.

This question, as well as some more general ones,  was thoroughly investigated in \cite{CosFreid}. We summarize below the notations in that paper and the result that is relevant here.
\begin{Definition}
    We define the germ equivalence relation,
    \begin{equation}
        f\sim_0 g \quad\text{if and only if }  \quad \exists \quad a>0\quad \text{s.t.} \quad f=g\,\, \text{on}\,\, (0,a].
    \end{equation}
\end{Definition}
 \begin{Definition}
     An operator A is based at $0^+$ if
$$f\sim_0 g \implies A(f)\sim_0 A(g)$$
 \end{Definition}
Operators based at other points in $\RR\cup\{\infty\}$ are analogously defined.

 \begin{Definition}
     For a continuous function $f$ on $(0,1]$ we denote by $f^{\bowtie}$ the multiplicative semigroup\footnote{This is referring to the semigroup $C^\infty\left((0,1]\right)$ not the set $f^{\bowtie}$ which does not admit such structure.} generated by $f$ and all smooth functions bounded by 1 on $(0, 1]$:

     \begin{equation}
         f^{\bowtie}:=\{f h:h\in C^\infty\left((0,1]\right),\|h\|_\infty\leq 1\}
     \end{equation}
     
 \end{Definition}

In \cite{CosFreid} the authors consider sets of operators given by the following definitions:  

\begin{Definition}\label{Def5}
    For $f\in C\left((0,1]\right)$,  $Op\left(f^{\bowtie}\right)$ is the collection of operators $\mathcal{P}_0 $ from $f^{\bowtie}$ into $C^1\left((0,1]\right)$ s.t.
    \begin{enumerate}[label=\Roman*.]
        \item $\mathcal{P}_0 $ is based at $0^+$
        \item $\forall g\in f^{\bowtie}$ we have $\left(\mathcal{P}_0 g\right)'=g$ 
    \end{enumerate}

\end{Definition}

\begin{Definition}\label{Def6}
    An extension of the integral from zero is an operator $\mathcal{Q}_0 : C\left((0,1]\right)\to C^1\left((0,1]\right)$ based at $0^+$ such that $\left(\mathcal{Q}_0 f\right)'= f$, and with the additional properties (III) $\mathcal{Q}_0 $ is linear, (IV) $(\mathcal{Q}_0 (f))(x) =\int_0^x f$ for $f\in L^1\left((0,1]\right)$, and (V) if $f > 0$ then  $\exists \ a$ s.t. $\left(\mathcal{Q}_0 f \right)(x)>0$ 
    for all $x\in (0,a)$.
\end{Definition}
For the current discussion we are interested in the following integral operator:
\begin{equation}
    \left(\mathcal{PV}_0 g\right)(x)={\rm{P.V.}}\int_0^x\frac{g(t)}{t}dt=\lim_{\epsilon \to 0} \int_\epsilon^x\frac{g(t)}{t}dt
\end{equation}
The domain of $\mathcal{PV}_0$ will be a set of the form $$\left( t^{-1}g_o\right)^{\bowtie}:=\{ t^{-1}g_o h:h\in C^\infty\left((0,1]\right),\|h\|_\infty\leq 1\}$$ 
where $g_o$ is the odd component of any $g\in C\left((0,1]\right)$.  Definition \ref{Def5} of \cite{CosFreid} establishes a family of operators $Op\left(\left( t^{-1}g\right)^{\bowtie}\right)$ based at $0^+$ which act as antidifferentiation.  We easily check that $\mathcal{PV}_0\in Op\left(\left(t^{-1}g_o\right)^{\bowtie}\right) $ which follows from the definitions.  Theorem 14 \cite{CosFreid} then shows that the statement:
$$\left(\exists t^{-1}g_o \not\in L^1   \right)\left(Op\left(\left(t^{-1} g_o\right)^{\bowtie}\right)\neq \emptyset \right)$$
is not provable in ZFDC (ZF with the Axiom of Dependent Choice). Furthermore, Theorem 16 of this same work shows that there is no definition which provably in ZFC uniquely defines some element of $Op\left(\left(t^{-1} g_o\right)^{\bowtie}\right)$ with $t^{-1}g_o \not\in L^1$.  This also holds for ZFC extended by the usual large cardinal hypotheses.
Since in applications we wish to perform explicit computations i.e. solve a Riemann Hilbert problem,  any solution needs to have some definition which Theorems 14 and 16 of \cite{CosFreid} rules out. Recalling that the existence of PV was reduced to the problem above, this shows that  the assumption of integrability i.e. $\displaystyle f_o/t\in L^1\left([-1,1]\right)$ cannot be further weakened.

\subsection{Necessary condition for the Sokhotski-Plemelj formulae to be valid:}  
The continuity condition on $f$ cannot be eliminated as can be seen in a very straightforward way: let $f:[-1,1]\to \mathbb{R}$ be given by
\begin{equation}
    f(t)=\begin{cases}
    t & t\neq 0\\
    1 & t=0
\end{cases} 
\end{equation}
then the lateral limits and the $\text{P.V.}$ integrals exist, but \eqref{ThmPlem} fails.

\section{Acknowledgements}
We are extremely grateful for Dr. Maxim Yattselev's comments about our previous paper on the generalization of Sokhotski-Plemeljs formulas which led to the work in this paper.  Additionally, we are also extremely grateful for Rodica Costin's generous help and keen suggestions.   \\

The work of OC is supported in part by the U.S. National Science Foundation, Division of
Mathematical Sciences, Award NSF DMS-2206241.

\section{Supplementary Material}\label{Plem1}
We provide an alternative proof to the Sokhotski-Plemelj formulae for Dini continuous functions on a simple smooth curve. We note that this extension follows from \cite{BAF}, however, our proof relies only on elementary analysis.
\subsection{Preliminaries} We start by reviewing some basic definitions and establish the notations we use. The curve $\mathcal{C}$ in the complex plane  $\mathbb{C}$  parameterized by $\psi: [a,b] \rightarrow \mathcal{C}$ is said to be simple is $\psi(t_1) \neq \psi(t_2)$ for $t_1 \neq t_2$, except possibly at the end points. If $\psi(a) = \psi(b)$, we say that the curve $\mathcal{C}$ is closed. 
We say $\mathcal{C}$ is smooth if $\psi$ is continuously differentiable up to some specified order and has non-zero first derivative at each point. 

\begin{Definition}[Cauchy Principle Value (P.V.) for curves]
    Let $\mathcal{C}$ be a simple and smooth curve. Consider a $t_0\in\overline{\mathcal{C}}\setminus \partial \mathcal{C}$\footnote{It's essential that $t_0$ is not an endpoint of the curve, otherwise the P.V. integrals will not be defined. Here we specify the endpoints which form the set $\partial\mathcal{C}=\overline{\mathcal{C}}\setminus \mathring{\mathcal{C}}$ defined by the metric topology on $\mathcal{C}$ induced from the arc-length parameterization.} and let $\phi$ be a continuous and integrable function on $\mathcal{C}$. By $\DD_{\epsilon}(t_0)$ we denote disks of radius $\epsilon$ centered at $t_0$. The Cauchy principal value line integral $\displaystyle {\rm P.V.}\int_{\mathcal{C}} \frac{\phi(t)}{t-t_0} dt $ denotes the limit of the integrals outside $\DD_\epsilon (t_0)$ \cite{Muskhel},
\begin{equation}
  \label{eq:CauchyPV}
  \lim_{\epsilon\to 0} \int_{\mathcal{C}\cap {\DD_{\epsilon}(t_0)}^c} \frac{\phi(t)}{t-t_0} dt 
\end{equation}
when this limit exists. 
\end{Definition}

We next review Hölder and Dini continuity. Consider $X$, $Y$ be two Euclidean spaces with euclidean norm $\|.\|_X$ and $\|.\|_Y$ respectively, and $f:X \rightarrow Y$ be a function between them.
\begin{Definition}[Hölder continuous]
    $f$ satisfies Hölder condition or is Hölder-continuous if there exists $C \ge 0$ and $\alpha > 0$ such that
    \begin{equation*}
        \|f(x_1)-f(x_2)\|_Y \le C \|x_1-x_2\|_X^{\alpha}
    \end{equation*}
    for all $x$, $y$ in $X$. 
\end{Definition}
The number $\alpha$ is called the exponent of the Hölder condition. For an open subset $\Omega$ of a Euclidean space and $k \ge 0$ an integer, $C^{k,\alpha} (\Omega)$ denotes the space consisting of all those functions on $\Omega$ having continuous derivatives up through order $k$ and such that the $k^{\text{th}}$ partial derivatives are Hölder continuous with exponent $\alpha$, where $0< \alpha \le 1$. \\

The modulus of continuity is a concept used to measure the continuity of a function. It quantifies how the function's values change as its input varies. The modulus of continuity provides a bound on the maximum difference between function values within a given range of inputs. 
\begin{Definition}[Modulus of continuity]
The modulus of continuity of a bounded function $f:X \rightarrow Y$ is a non-decreasing function $\omega_f: [0,\infty) \rightarrow [0,\infty)$ defined as
\begin{equation}\label{ModulusofCont}
    \omega_f(t) := \sup\left\{ \|f(x_1)-f(x_2)\|_Y:\|x_1-x_2\|_X \leq t\right\}.
\end{equation}
\end{Definition} 

\noindent  Moduli of continuity such as $\omega_f$ find themselves at an intersection of many classical and fundamental areas of analysis. For $\alpha\in (0,1]$, a function $f$ is $\alpha$-Hölder continuous if and only if there exists a constant $M >0$ such that $\omega_f(t) \le M t^{\alpha}$ for all $t \ge 0$.
\begin{Remark}
  Throughout the section we work on the metric space defined by the smooth curve $\mathcal{C}$ equipped with the Euclidean metric inherited from $\CC$. While in the class of smooth curves the arc-length metric can be equivalently used,
    and the results would be {\em mutatis-mutandis} the same, this choice simplifies the notation. 
     
\end{Remark}

\begin{Definition}[Dini continuous]
    $f$ is said to be a Dini continuous function if 
    \begin{equation*}
        \int_{0}^{1} \dfrac{\omega_f(t)}{t} dt < \infty.
    \end{equation*}
\end{Definition}

\begin{Remark}
If $f:X \rightarrow Y$ is a Dini continuous function on $X$ then it is also continuous on $X$. 
\end{Remark}
\begin{Remark}
    Dini continuity is strictly weaker than Hölder continuity.
\end{Remark}
We can easily check that the function $f$ defined by $f(x)=\mathbbm{1}_{[0,\mathrm{e}^{-3})}\log(x)^{-2}$ is Dini continuous, but for no exponent $\alpha$ is it H\"older continuous.
\newpage
\subsection{Main Results}
\begin{Theorem} \label{MainThm}
Given a simple, smooth curve $\mathcal{C} \subset \CC$ and 
 $\varphi$ a Dini continuous, integrable\footnote{\label{L1Con}We assume $\varphi \in L^1\left(\mathcal{C},d|\mu_\mathcal{C}|\right)$, where $\mu_\mathcal{C} $ is the complex measure defined by $d\mu_\mathcal{C}=\psi'(t)dt$, $\psi$ is the arc length parameterization of $\calC$, and $dt$ is the Lebesgue measure.} function defined on $\calC$. Then, for any $t\in \mathcal{C}\setminus \partial \mathcal{C}$, the following are true: 
\begin{enumerate}[label=(\roman*)]
    \item \label{exists}
    The {\rm{P.V.}} integral along $\calC$ defined by 
    \begin{equation}\label{PVintexist}
        \text{\rm{\rm{P.V.}}}\int_{\mathcal{C}}\frac{\varphi(s)}{s-t}ds
    \end{equation} 
    exists. See \eqref{eq:CauchyPV} for the definition of {\rm{P.V.}} integrals along contours.
    \item  \label{SeqCurv} Consider any sequence $\{z_n\}_{n\in\NN}$ contained in $\CC\setminus\calC$ converging to the point $t\in \mathcal{C}\setminus \partial \mathcal{C}$ from the left (right) of the curve as defined by the orientation of the curve.  Then, with the $\pm$ sign, $+$ being for the left limit and $-$ for the right limit we have the following
    \begin{equation}
        \lim_{n\to \infty} \Phi(z_n)=\Phi^{\pm}(t)
    \end{equation}
    where 
    \begin{equation}\label{PlemeljForm}
        \Phi^\pm(t)=\pm\frac{1}{2}\varphi(t)+\frac{1}{2 \pi i } \text{\rm{\rm{P.V.}}}\int_{\mathcal{C}}\frac{\varphi(s)}{s-t}ds.
    \end{equation}
\end{enumerate}

\end{Theorem}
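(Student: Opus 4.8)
The plan is to split the Cauchy integral into the contribution of the constant density and a Dini‑controlled remainder. Fix $t_0:=t\in\calC\setminus\partial\calC$ (note $\varphi(t_0)$ makes sense, Dini continuity implying continuity), and write, for $z\in\CC\setminus\calC$,
\begin{equation*}
\Phi(z)=\varphi(t_0)\,g(z)+\Psi(z),\qquad g(z):=\frac{1}{2\pi i}\int_{\calC}\frac{ds}{s-z},\qquad \Psi(z):=\frac{1}{2\pi i}\int_{\calC}\frac{\varphi(s)-\varphi(t_0)}{s-z}\,ds .
\end{equation*}
For (i) I would first observe that $\int_{\calC}\frac{\varphi(s)-\varphi(t_0)}{s-t_0}\,ds$ is \emph{absolutely} convergent: since $|\varphi(s)-\varphi(t_0)|\le\omvp(|s-t_0|)$ and, parametrizing near $t_0$ by arc length, $\int_{\calC}\frac{\omvp(|s-t_0|)}{|s-t_0|}\,|ds|\lesssim\int_0^1\frac{\omvp(r)}{r}\,dr<\infty$ by Dini continuity. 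Next, $\mathrm{P.V.}\int_{\calC}\frac{ds}{s-t_0}$ exists by the classical elementary argument: on $\calC\setminus\DD_\epsilon(t_0)$ one has $\frac{ds}{s-t_0}=d\log(s-t_0)$, so the integral is a difference of logarithms at the endpoints of the removed sub‑arc, and as $\epsilon\to0$ the moduli cancel (both equal $\log\epsilon$) while the arguments converge because $\calC$ has a tangent line at $t_0$; call the limit $\kappa$ (it equals $\pi i$ for a closed positively oriented curve). Writing $\frac{\varphi(s)}{s-t_0}=\frac{\varphi(s)-\varphi(t_0)}{s-t_0}+\frac{\varphi(t_0)}{s-t_0}$ and passing to the limit with dominated convergence in the first term then gives that $\mathrm{P.V.}\int_{\calC}\frac{\varphi(s)}{s-t_0}\,ds=\int_{\calC}\frac{\varphi(s)-\varphi(t_0)}{s-t_0}\,ds+\varphi(t_0)\kappa$ exists, proving (i).

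For (ii) I would dispose of the constant density first: $g$ itself obeys Plemelj, i.e.\ $g(z)\to g^\pm(t_0):=\pm\tfrac12+\tfrac1{2\pi i}\kappa$ as $z\to t_0$ from the left, resp.\ right. This is elementary — for a closed positively oriented curve it is just $g\equiv1$ inside and $g\equiv0$ outside, $\kappa=\pi i$, which also pins down the sign convention; for an arc it comes from $g(z)=\tfrac1{2\pi i}\log\tfrac{b-z}{a-z}$ and continuity of the logarithm, the jump $\pm1$ being the winding of the argument past $t_0$; the general smooth curve reduces to these by localizing, since $\tfrac1{2\pi i}\int_{\calC\setminus\DD_\delta(t_0)}\tfrac{ds}{s-z}$ is continuous at $t_0$.

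The heart of the argument is then to show that $\Psi$ extends continuously to $t_0$, namely $\Psi(z)\to\Psi_0:=\tfrac1{2\pi i}\int_{\calC}\tfrac{\varphi(s)-\varphi(t_0)}{s-t_0}\,ds$ as $z\to t_0$ through $\CC\setminus\calC$ with no restriction on the mode of approach. Write $\rho:=|z-t_0|$, let $\zeta\in\calC$ be a nearest point to $z$ — interior to $\calC$ once $\rho$ is small — and set $d:=|z-\zeta|=\operatorname{dist}(z,\calC)\le\rho$. Split $\calC=\calC'\cup\calC''$ where $\calC'$ is the sub‑arc about $t_0$ of arc length comparable to $\rho$ (so $\calC'\supset\calC\cap\DD_{2\rho}(t_0)$ and $\zeta\in\calC'$). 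Starting from $\Psi(z)-\Psi_0=\tfrac{z-t_0}{2\pi i}\int_{\calC}\tfrac{\varphi(s)-\varphi(t_0)}{(s-z)(s-t_0)}\,ds$, I would bound the two pieces separately. On $\calC''$ one has $|s-z|\ge\tfrac12|s-t_0|$, so by subadditivity of $\omvp$ this piece is $\lesssim\rho\int_{c\rho}^{L}\tfrac{\omvp(r)}{r^2}\,dr\lesssim\omvp(\rho)\log\tfrac1\rho$, which tends to $0$ because Dini continuity forces $\omvp(\rho)\log\tfrac1\rho\to0$ (from $\int_\rho^{\sqrt\rho}\tfrac{\omvp(u)}{u}\,du\ge\tfrac12\omvp(\rho)\log\tfrac1\rho$, whose left side is a tail of a convergent integral). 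On $\calC'$ I would use $\tfrac{z-t_0}{(s-z)(s-t_0)}=\tfrac1{s-z}-\tfrac1{s-t_0}$ to write this piece as $\tfrac1{2\pi i}\int_{\calC'}\tfrac{\varphi(s)-\varphi(t_0)}{s-z}\,ds-\tfrac1{2\pi i}\int_{\calC'}\tfrac{\varphi(s)-\varphi(t_0)}{s-t_0}\,ds$; the second integral is $\le\int_{\calC'}\tfrac{\omvp(|s-t_0|)}{|s-t_0|}\,|ds|\lesssim\int_0^{c\rho}\tfrac{\omvp(r)}{r}\,dr\to0$ (a Dini tail), while in the first I would re‑center $\varphi$ at $\zeta$, writing it as $\tfrac1{2\pi i}\int_{\calC'}\tfrac{\varphi(s)-\varphi(\zeta)}{s-z}\,ds+\tfrac{\varphi(\zeta)-\varphi(t_0)}{2\pi i}\int_{\calC'}\tfrac{ds}{s-z}$. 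Here $\int_{\calC'}\tfrac{ds}{s-z}=\log\tfrac{b'-z}{a'-z}=O(1)$ because the endpoints $a',b'$ of $\calC'$ lie at distance comparable to $\rho$ from $z$, and $|\varphi(\zeta)-\varphi(t_0)|\le\omvp(2\rho)\to0$; and $\bigl|\int_{\calC'}\tfrac{\varphi(s)-\varphi(\zeta)}{s-z}\,ds\bigr|\le\int_{\calC'}\tfrac{\omvp(|s-\zeta|)}{|s-z|}\,|ds|$, which, using that $\zeta$ is the nearest point and $\calC$ is smooth so that $|s-z|\gtrsim|s-\zeta|+d$ for $s\in\calC'$, is $\lesssim\int_{|\tau|\lesssim\rho}\tfrac{\omvp(|\tau|)}{|\tau|+d}\,d\tau\le2\int_0^{c\rho}\tfrac{\omvp(r)}{r}\,dr\to0$. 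Hence $\Psi(z)\to\Psi_0$.

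Combining these,
\begin{equation*}
\Phi(z)=\varphi(t_0)g(z)+\Psi(z)\ \longrightarrow\ \varphi(t_0)\Bigl(\pm\tfrac12+\tfrac{\kappa}{2\pi i}\Bigr)+\Psi_0=\pm\tfrac12\varphi(t_0)+\tfrac1{2\pi i}\,\mathrm{P.V.}\!\int_{\calC}\frac{\varphi(s)}{s-t_0}\,ds=\Phi^\pm(t_0),
\end{equation*}
the last equality by the P.V.\ identity established in (i); this is (ii). The hard part — the only place the improvement over the Hölder proof really bites — is the near estimate on $\calC'$ when $z\to t_0$ tangentially with $d\ll\rho$: the crude bound $\bigl|\tfrac1{s-z}-\tfrac1{s-t_0}\bigr|\le\tfrac1{|s-z|}+\tfrac1{|s-t_0|}$ throws away the logarithmic cancellation in $\int\tfrac{ds}{s-z}$ and leaves an uncontrollable $\omvp(\rho)\log(\rho/d)$; re‑centering $\varphi$ at the nearest point $\zeta$ together with the geometric estimate $|s-z|\gtrsim|s-\zeta|+d$ (valid because $z-\zeta$ is normal to $\calC$ at $\zeta$) is exactly what restores a convergent Dini tail. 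I would isolate the two ancillary facts — that Dini continuity implies $\omvp(r)\log(1/r)\to0$, and the elementary evaluations of $g^\pm$ and of $\mathrm{P.V.}\int_{\calC}ds/(s-t_0)$ — as short lemmas.
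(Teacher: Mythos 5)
Your argument is essentially correct, but it takes a genuinely different route from the paper's. The paper proves the result first for $\mathcal{C}=[-1,1]$, $t=0$, and reduces the general curve to that case via the parameterization $\psi$, which requires an invariance lemma (Dini continuity is preserved under composition with, and multiplication by, $C^1$ functions) and a limit lemma proved by deforming $[-c,c]$ to a semicircle and taking the iterated limits $\lim_{c\to0}\lim_{n\to\infty}$ of $\int_{-c}^{c}F_n(w)\,(w-iJ_n(w))^{-1}dw$; the jump $\pm\tfrac12\varphi(t)$ arises there as a residue-type contribution. You instead use the classical Muskhelishvili-style decomposition $\Phi(z)=\varphi(t_0)g(z)+\Psi(z)$: the constant-density integral $g$ is evaluated explicitly by logarithms (which simultaneously yields existence of ${\rm P.V.}\!\int_{\mathcal C}ds/(s-t_0)$ and the lateral limits $g^\pm=\pm\tfrac12+\kappa/(2\pi i)$, hence the jump), while the Dini hypothesis enters only through the continuity of $\Psi$ at $t_0$, proved by the far/near splitting with the re-centering at the nearest point $\zeta$ and the estimate $|s-z|\gtrsim|s-\zeta|+d$. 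What your approach buys is a stronger and cleaner statement -- $\Psi$ converges with no restriction on the mode of approach, so the one-sided hypothesis is needed only for the explicitly computable term $g$, and tangential approach ($d\ll\rho$) is handled transparently -- at the cost of some smooth-curve geometry; the paper's approach buys a single mechanism (the lemma on $w-iJ_n(w)$) that localizes everything to the real line, at the cost of the parameterization bookkeeping and the iterated-limit structure.

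Three points in your sketch deserve explicit justification, though none is a genuine gap. First, the subadditivity of $\omega_\varphi$ used in the far-field bound is automatic for the arc-length modulus but not literally for the chordal one; either invoke the local equivalence of the two metrics on a smooth curve (as the paper's remark does), or avoid subadditivity altogether by splitting $\rho\int_{c\rho}^{L}\omega_\varphi(r)r^{-2}dr$ at $r=\sqrt{\rho}$, which gives $\int_{c\rho}^{\sqrt{\rho}}\omega_\varphi(r)r^{-1}dr+O(\sqrt{\rho}\,)\to0$ directly from the Dini condition. Second, the claim $\int_{\calC'}\frac{ds}{s-z}=\log\frac{b'-z}{a'-z}=O(1)$ needs a bound on the imaginary part as well: the comparability $|a'-z|,|b'-z|\asymp\rho$ only controls the modulus, and the net change of $\arg(s-z)$ along $\calC'$ must be bounded (it is, uniformly, because for small $\rho$ the arc $\calC'$ is a graph with small slope over its chord, so the winding of the closed curve arc-plus-chord about $z$ is $0$ or $\pm1$ and the argument variation is at most $3\pi$). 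Third, you should record the standard localization facts you use implicitly: for small $\rho$ the set $\calC\cap\DD_{2\rho}(t_0)$ is a single sub-arc, chord and arc length are comparable there, and the nearest point $\zeta$ lies in $\calC'$ and in the interior of $\calC$. With these made explicit, your two ancillary lemmas (that Dini continuity forces $\omega_\varphi(\rho)\log(1/\rho)\to0$, and the elementary evaluation of $g^\pm$ and of ${\rm P.V.}\!\int_{\mathcal C}ds/(s-t_0)$) carry the proof, and the sign convention you fix via the closed-curve case agrees with the paper's (left limit $=+$).
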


Before the proof of Theorem \ref{MainThm} we present two helpful technical results Lemma \ref{DiniLemma} and Lemma \ref{LimitLemma}. The proof of these lemmas are provided in the next section. The first one shows an invariance of the space of Dini continuous functions under two basic operations with continuously differentiable functions.

\begin{Lemma}
\label{DiniLemma}
Let $K$ be  compact subset of our simple smooth curve $\mathcal{C} \subset \mathbb{C}$. Suppose $\phi : K \rightarrow \mathbb{C}$ is a Dini continuous function. Suppose, for a given interval $[a,b]$, $\psi : [a,b] \rightarrow K$ and $\chi : \mathcal{C} \rightarrow \mathbb{C}$ are continuously differentiable functions. Then the functions $\phi \circ \psi: [a,b] \rightarrow \mathbb{C}$ and $\chi \phi : K \rightarrow \mathbb{C}$ are Dini continuous functions.
\end{Lemma}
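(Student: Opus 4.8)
The plan is to prove the two closure properties separately, in each case by estimating the modulus of continuity of the new function in terms of $\omega_\phi$ and the $C^1$ data, and then checking that the Dini integral condition is preserved.

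For the composition $\phi \circ \psi$ with $\psi:[a,b]\to K$ continuously differentiable, the key observation is that $\|\psi(t_1)-\psi(t_2)\| \le L\,|t_1-t_2|$ with $L = \sup_{[a,b]}\|\psi'\|<\infty$ (finite by continuity on a compact interval). Hence $\omega_{\phi\circ\psi}(t) \le \omega_\phi(Lt)$ for all $t\ge 0$. Then $\int_0^1 \frac{\omega_{\phi\circ\psi}(t)}{t}\,dt \le \int_0^1 \frac{\omega_\phi(Lt)}{t}\,dt = \int_0^L \frac{\omega_\phi(u)}{u}\,du$ by the substitution $u=Lt$; this is finite because $\int_0^1$ is finite by hypothesis and $\int_1^L \frac{\omega_\phi(u)}{u}\,du \le \omega_\phi(\operatorname{diam}K)\log^+ L < \infty$ since $\omega_\phi$ is bounded on the compact set $K$. (If $L\le 1$ the splitting is unnecessary.)

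For the product $\chi\phi$ with $\chi:\mathcal{C}\to\CC$ continuously differentiable, restrict $\chi$ to $K$; it is then bounded, say $\|\chi\|_\infty \le M$ on $K$, and Lipschitz on $K$ with some constant $L'$ (again because $K$ is compact and $\chi'$ is continuous — here one uses that on a smooth curve the arc-length parametrization makes $\chi$ a $C^1$ function of a real variable, so the mean value inequality applies). The standard estimate
\begin{equation*}
\|\chi(s_1)\phi(s_1) - \chi(s_2)\phi(s_2)\| \le \|\chi(s_1)\|\,\|\phi(s_1)-\phi(s_2)\| + \|\phi(s_2)\|\,\|\chi(s_1)-\chi(s_2)\|
\end{equation*}
gives $\omega_{\chi\phi}(t) \le M\,\omega_\phi(t) + \|\phi\|_\infty L' t$, where $\|\phi\|_\infty<\infty$ since Dini continuity implies continuity (previous Remark) and $K$ is compact. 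Dividing by $t$ and integrating over $[0,1]$, the first term contributes $M\int_0^1\frac{\omega_\phi(t)}{t}\,dt<\infty$ and the second contributes $\|\phi\|_\infty L'<\infty$, so $\chi\phi$ is Dini continuous.

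The only genuinely delicate point is the reduction of the $C^1$ hypotheses to Lipschitz bounds: one must be careful that "continuously differentiable on the curve $\mathcal{C}$" is interpreted via the (smooth) arc-length parametrization, so that the mean value inequality is legitimately available, and that the Lipschitz constants are taken over the compact set $K$ rather than all of $\mathcal{C}$ — on a noncompact curve $\chi'$ need not be bounded. Once that is pinned down, both arguments are the routine $\omega$-estimates sketched above, and the bi-module claim in the footnote follows by iterating the product statement.
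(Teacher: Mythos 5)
Your proposal is correct and follows essentially the same route as the paper: the Lipschitz bound $\omega_{\phi\circ\psi}(t)\le\omega_\phi(Lt)$ from the $C^1$ hypothesis, the substitution $u=Lt$ with the split $\int_0^L=\int_0^1+\int_1^L$ handled by boundedness of $\omega_\phi$ on the compact set, and the standard product estimate $\omega_{\chi\phi}(t)\le M\,\omega_\phi(t)+\|\phi\|_\infty L' t$. The paper phrases the Lipschitz constants via the mean value theorem for integrals and the boundedness of the difference quotient of $\chi$ on $K$, but this is the same argument as yours.
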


Next, Lemma \ref{LimitLemma} establishes the source of the residue term  $\pm \frac{1}{2}\varphi(t)$ in Plemelj's formula.  This result shows the necessity of the single sided approach to the curve.  Without loss of generality, we assume the sequence $\{z_n\}_{n\in\NN}$ from the hypothesis of Theorem \ref{MainThm} is converging to the origin and the curve $\calC$ is parameterized by arc length via \label{param}$\psi(t)=u(t)+iv(t)$.

\begin{Lemma} \label{LimitLemma}
Given $0<c<1$, a Dini continuous function $F: [-1,1] \rightarrow \mathbb{C}$, and a real sequence $\mathbf{x}\in \mathbf{c_0}(\RR)$\footnote{$\mathbf{c_0}(\RR)$ is the subspace of $\ell^\infty(\RR)$ consisting of all $\RR$ valued sequences converging to zero with the norm $\|\mathbf{x}\|_\infty = \sup_n |x_n|$.} such that $c+\|\mathbf{x}\|_\infty<1$.  Consider the sequence of functions $F_n:[-c,c] \rightarrow \mathbb{C}$ defined by 
\begin{equation*}
    F_n(w)=F(w+x_n)
\end{equation*}
which  converges pointwise to $F$ by continuity. Let $\{J_n\}_{n\in \NN}$, $J_n:[-c,c] \rightarrow \mathbb{R}$  be defined by 
\begin{equation*}
    J_n(w)=\zeta_n+G(w+\eta_n)
\end{equation*}
where $\{\zeta_n\}_{n\in\NN},\{\eta_n\}_{n\in\NN}$ are real sequences which converge to zero, and $G:[-1,1] \rightarrow \mathbb{R}$ is given by  $G(t)=(v\circ u^{-1})(t)$ where $\psi(t)=u(t)+iv(t)$. We choose  $ \eta = \{\eta_n\}_{n \in \mathbb{N}}$ so that  $c + \|\mathbf{\eta}\|_\infty <1$. We assume that:

\begin{enumerate}
     \item $u,v$ are bijections, $\psi(0) = 0$, and $\psi'(0) = 1$;
     \item $G\in C^{k,\alpha}([-c,c],\RR)$ for $k\geq 1$ and $\alpha\in(0,1]$ satisfying $G(0)=G'(0)=0$;
     \item $J_n$ possesses only one sign on $[-c,c]$ for all $ n\in \NN$. 
\end{enumerate}
Then the following are true: 
\begin{enumerate}[label=(\roman*)]
    \item \begin{equation}\label{LemmaEq}
    \lim_{c\to 0}\lim_{n\to \infty}\int _{-c}^c\frac{F_n(w)}{w-iJ_n(w)}dw=\pm\pi i F(0)
\end{equation}
The sign in \eqref{LemmaEq} follows that of $J_n$. 

\item The principal value integral 
\begin{equation}\label{LemmaGenExists}
    {\rm{P.V.}}\int_{-c}^c \frac{F(w)}{w-iJ(w)}dw=\lim_{\epsilon\to 0} \int_{[-c,c]\cap \DD_{\epsilon}^c} \frac{F(w)}{w-iJ(w)} dw
\end{equation}
exists, where the function $J$ is the pointwise limit of $J_n$.  
\item \label{limit_to_1} We are free to let $c\to 1$ in \eqref{LemmaGenExists} and hence

\begin{equation} \label{Ceq1}
     {\rm{P.V.}}\int_{-1}^1\frac{F(w)}{w-iJ(w)}dw=\lim_{\epsilon\to 0} \int_{[-1,1]\cap \DD_{\epsilon}^c} \frac{F(w)}{w-iJ(w)} dw
\end{equation}
exists.

\end{enumerate}
\end{Lemma}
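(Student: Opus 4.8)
The plan is to prove the three parts in order, with part (i) carrying the main analytic work and parts (ii) and (iii) following by comparison and a routine tail estimate. Throughout, write $D_n(w) = w - iJ_n(w)$ for the denominator and note that assumption (3) — that $J_n$ has a fixed sign — is what guarantees $D_n(w)$ never vanishes for $w\in[-c,c]\setminus\{0\}$, so each integrand is genuinely integrable away from $0$, and the only singularity is at $w=0$ where $D_n(0) = -i\zeta_n + $ (a term that is $o(w)$ near $0$). The sign of $J_n$ will dictate which half-plane the ``phase'' of $D_n$ lives in, and hence the sign of the $\pm\pi i F(0)$ term.

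For part (i), I would first reduce to the model case. Write
\begin{equation*}
  \int_{-c}^c \frac{F_n(w)}{w-iJ_n(w)}\,dw
  = F(0)\int_{-c}^c \frac{dw}{w-iJ_n(w)}
  + \int_{-c}^c \frac{F_n(w)-F(0)}{w-iJ_n(w)}\,dw.
\end{equation*}
For the second integral, use $|F_n(w)-F(0)| = |F(w+x_n)-F(0)| \le \omega_F(|w|+|x_n|)$ and the lower bound $|w - iJ_n(w)| \ge |w|$ (valid since $J_n$ is real, so $|D_n(w)|^2 = w^2 + J_n(w)^2 \ge w^2$), to dominate it by $2\int_0^{c+\|\mathbf{x}\|_\infty} \omega_F(t)/t\,dt$, which is finite by Dini continuity and tends to $0$ as $c\to 0$ uniformly in $n$; dominated convergence then lets me send $n\to\infty$ first and the integral still vanishes in the double limit. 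The real content is the first integral: $\int_{-c}^c dw/(w - iJ_n(w))$. Here I would use assumptions (1)–(2): since $G\in C^{k,\alpha}$ with $G(0)=G'(0)=0$, we have $G(w+\eta_n) = \zeta_n' + O(w)\cdot$(small) so that $J_n(w) = \zeta_n + G(w+\eta_n)$ behaves, near $0$, like a constant $a_n\to 0$ plus a function that is $o(1)$ times $w$ — more precisely $J_n(w)/w \to 0$ as $c\to 0$ uniformly enough. Then $\int_{-c}^c dw/(w-iJ_n(w))$ should be computed by recognizing it as (approximately) $\int_{-c}^c dw/(w - i a_n)$ up to a correction that vanishes in the limit; and $\int_{-c}^c dw/(w-ia) = \log\frac{c-ia}{-c-ia} \to \pm i\pi$ as the ratio $c/a \to \pm\infty$ (sign determined by $\operatorname{sgn} a$, i.e. by $\operatorname{sgn} J_n$). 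Making the ``correction vanishes'' step rigorous — controlling $\int_{-c}^c \big(\frac{1}{w-iJ_n(w)} - \frac{1}{w-i\tilde J_n(w)}\big)dw$ where $\tilde J_n$ is the linearization — is where I expect to spend the most effort, and it is the main obstacle: one must exploit the Hölder regularity of $G'$ to show the difference of denominators is $O(w^{1+\alpha})$ or better, small relative to $|w|^2$ only after dividing, so the integral of the difference is $O(c^\alpha)$ uniformly in $n$.

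For part (ii), existence of the principal value integral $\mathrm{P.V.}\int_{-c}^c F(w)/(w-iJ(w))\,dw$ with $J$ the pointwise limit (so $J(w) = \zeta_\infty + G(w+\eta_\infty)$ with $\zeta_\infty = \lim\zeta_n$, $\eta_\infty=\lim\eta_n$, both possibly nonzero if the sequences don't go to $0$ — but in fact they do converge to zero by hypothesis, so $J(w) = G(w)$, with $G(0)=0$): the point is that $1/(w-iG(w))$ has an integrable-type singularity once we symmetrize. Split off $F(0)$ again as above; the remainder $\int (F(w)-F(0))/(w-iG(w))\,dw$ converges absolutely by the same $\omega_F(|w|)/|w|$ Dini bound. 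For the $F(0)\int_{[-c,c]\cap\mathbb{D}_\epsilon^c} dw/(w-iG(w))$ piece, write $\frac{1}{w-iG(w)} = \frac{1}{w} + \frac{iG(w)}{w(w-iG(w))}$; the second term is $O(G(w)/w^2) = O(1/|w|^{1-\alpha})$ near $0$ using $G(w)=O(w^{1+\alpha})$ from $G(0)=G'(0)=0$ and $G'\in C^{0,\alpha}$, hence absolutely integrable, and $\mathrm{P.V.}\int_{-c}^c dw/w = 0$ by oddness. So the limit exists.

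For part (iii), letting $c\to 1$: the difference between the P.V. over $[-1,1]$ and over $[-c,c]$ is $\int_{c\le |w|\le 1} F(w)/(w-iJ(w))\,dw$, an integral over a region bounded away from the singularity $w=0$, where the integrand is continuous and $F$ is integrable (it is bounded, being continuous on a compact set — or use the $L^1$ hypothesis); hence this is a perfectly ordinary convergent integral and adding it back does not disturb existence. One subtlety to address: assumption (3) was stated for $J_n$ on $[-c,c]$, and to make sense of $J$ on all of $[-1,1]$ one needs $J$ (equivalently $G$, equivalently $v\circ u^{-1}$) defined there, which it is by assumption (1) that $u,v$ are bijections on $[-1,1]$; the sign condition is only needed near $0$ for the non-vanishing of the denominator, and away from $0$ the denominator is nonzero simply because its real part $w$ is nonzero. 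I would close by remarking that this whole Lemma is the engine behind the $\pm\frac12\varphi(t)$ term in Theorem~\ref{MainThm}: the curve $\mathcal{C}$, locally straightened via $u^{-1}$, turns the Cauchy kernel $1/(s-t)$ into exactly $1/(w - iG(w))$ up to a $C^1$ factor absorbed by Lemma~\ref{DiniLemma}, and the one-sided approach of $z_n$ is encoded in the fixed sign of $J_n$.
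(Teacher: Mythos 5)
Parts (ii) and (iii) of your proposal are sound and essentially coincide with the paper's own argument: split off $F(0)$, control the difference by $\omega_F(|w|)/|w|$, use $G(0)=G'(0)=0$ together with the H\"older continuity of $G'$ to get $|G(w)|\le C|w|^{1+\alpha}$ so that the correction to $1/w$ is absolutely integrable, and observe that $\mathrm{P.V.}\int_{-c}^{c}dw/w=0$; the passage $c\to1$ only adds a proper integral away from the singularity. The genuine gaps are in part (i). First, your dominated-convergence step for the remainder term fails as written: centering at $F(0)$ gives the majorant $\omega_F(|w|+|x_n|)/|w|$, which is \emph{not} integrable on $[-c,c]$ when $x_n\neq0$ (near $w=0$ it is of size $\omega_F(|x_n|)/|w|$), so it cannot serve as an $L^1$ dominating function; moreover the claimed bound $2\int_0^{c+\|\mathbf{x}\|_\infty}\omega_F(t)/t\,dt$ does not dominate it (the substitution $t=|w|+|x_n|$ produces $1/(t-|x_n|)$, not $1/t$) and does not tend to $0$ as $c\to0$, since the upper limit stays above $\|\mathbf{x}\|_\infty$. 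The paper avoids this by subtracting $F_n(0)=F(x_n)$ rather than $F(0)$, so the numerator is bounded by $\omega_F(|w|)$ uniformly in $n$; the leftover piece $(F_n(0)-F(0))\int_{-c}^{c}dw/(w-iJ_n(w))$ is then harmless because $F_n(0)\to F(0)$ while the integral factor is exactly the quantity one computes anyway.

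Second, the heart of (i), the evaluation of $\lim_{c\to0}\lim_{n\to\infty}\int_{-c}^{c}dw/(w-iJ_n(w))$, is precisely the step you leave open, and the mechanism you sketch does not close as stated: after replacing $J_n$ by the constant $a_n$, the natural bound on the correction is $C|w+\eta_n|^{1+\alpha}/|w|^{2}$, whose $|\eta_n|^{1+\alpha}/|w|^{2}$ part is not integrable near $w=0$, so neither dominated convergence in $n$ at fixed $c$ nor a uniform $O(c^{\alpha})$ estimate follows from the H\"older continuity of $G'$ alone; the sequences $\zeta_n$ and $\eta_n$ are unrelated, so quantities such as $|\eta_n|^{1+\alpha}/|\zeta_n|$ cannot be controlled either. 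The paper resolves exactly this interchange problem by a different device: it keeps $F_n(0)/(w-iJ_n(w))$ intact, deforms $[-c,c]$ to the semicircle $\gamma_c$ of radius $c$ lying in the half-plane opposite to the sign of $J_n$, where $|w|=c$ is bounded away from the singularity so the $n\to\infty$ limit is immediate, and then lets $c\to0$ using $G(ce^{it})/(ce^{it})\to0$, which yields $\pm\pi iF(0)$. To complete your route while staying on the real line you would need some comparable device (for instance the exact identity $\int_{-c}^{c}\frac{1-iJ_n'(w)}{w-iJ_n(w)}dw=\log\left(w-iJ_n(w)\right)\big|_{-c}^{c}$ plus a careful, shift-aware estimate of the remaining term); as it stands, part (i) is not proved.
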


\begin{proof}[Proof of Theorem \ref{MainThm}]
$ $\newline
\indent The theorem is proved first for the case when $\mathcal{C}=[-1,1]$ and  $t=0$ and then the proof for a general smooth curve reduces to this case. \\
Case-1: $\mathcal{C}=[-1,1]$ and  $t=0$. Here $\psi: [-1,1] \rightarrow [-1,1]$ with $\psi (x) = x.$
\begin{enumerate}[label=(\roman*)]
    \item The existence of $\displaystyle {\rm{P.V.}} \int_{-1}^{1} \dfrac{\phi(s)}{s-0}ds$ is an immediate consequence of Lemma \ref{LimitLemma} \ref{limit_to_1} with $J(w)=0$.

    
    \item Take a sequence $\{z_n\}_{n\in\NN}$ confined to the upper half plane converging to $t = 0$. Write $z_n=x_n+i y_n$ such that $y_n>0$ for all $n\in\NN$. This means that our sequence $\{ z_n \} _{n \in \mathbb{N}}$ lies to the left of $\mathcal{C}$. We wish to compute 
    \begin{equation}
        \lim_{n\to \infty}\Phi(z_n)= \lim_{n\to \infty}\frac{1}{2\pi i} \int_{-1}^1\frac{\varphi(s)}{s-z_n}ds.
    \end{equation}
    Let $0<c <1 $, make the change of variable $u=s-x_n$ and decompose the domain of integration into $[-1-x_n,-c]\cup[-c,c]\cup[c,1-x_n]$.
    \begin{align*}
        \int_{-1}^{1} \dfrac{\phi(s)}{s - z_n} ds &= \int_{-1-x_n}^{1-x_n} \dfrac{\phi(u + x_n)}{u+x_n - z_n} du = \int_{-1-x_n}^{1-x_n} \dfrac{\phi(u+x_n)}{u - i y_n} du \\
        &= \int_{-1-x_n}^{-c} \dfrac{\phi(u+x_n)}{u - i y_n} du + \int_{-c}^{c} \dfrac{\phi(u+x_n)}{u - i y_n} du + \int_{c}^{1-x_n} \dfrac{\phi(u+x_n)}{u - i y_n} du
    \end{align*}
    In the above, $z_n \rightarrow 0$ implies $x_n \rightarrow 0$, so, for large enough $n$  we have that $-1-x_n < -c$ and $c< 1-x_n$. Note that, by dominated convergence,     
    \begin{equation}\label{NSlim}
       \lim_{n\to \infty} \left (\int_{-1-x_n}^{-c}\frac{\varphi(u+x_n)}{u-iy_n}du+\int_{c}^{1-x_n}\frac{\varphi(u+x_n)}{u-iy_n}du\right )=\int_{-1}^{-c}\frac{\varphi(u)}{u}du+\int_c^1\frac{\varphi(u)}{u}du.
    \end{equation}
    We now let $c\to0$. 
    \begin{align*}
        \lim_{n\to \infty} \Phi(z_n) &= \lim_{c \to 0} \lim_{n\to \infty}\Phi(z_n) \\ 
        &= \dfrac{1}{2 \pi i} \lim_{c \to 0} \lim_{n\to \infty} \left( \int_{-1-x_n}^{-c} \dfrac{\phi(u+x_n)}{u - i y_n} du  + \int_{c}^{1-x_n} \dfrac{\phi(u+x_n)}{u - i y_n} du \right) \\ & +  \dfrac{1}{2 \pi i} \lim_{c \to 0} \lim_{n\to \infty} \int_{-c}^{c} \dfrac{\phi(u+x_n)}{u - i y_n} du \\
        &= \dfrac{1}{2 \pi i} \lim_{c \to 0} \left( \int_{-1}^{-c}\frac{\varphi(u)}{u}du+\int_c^1\frac{\varphi(u)}{u}du \right) + \dfrac{1}{2 \pi i} \lim_{c \to 0} \lim_{n\to \infty} \int_{-c}^{c} \dfrac{\phi(u+x_n)}{u - i y_n} du \\
        &= \dfrac{1}{2 \pi i}{\rm{P.V.}} \int_{-1}^{1} \dfrac{\phi(s)}{s} ds + \dfrac{1}{2 \pi i} \pi i \phi(0)
    \end{align*}
    For the last term (the integral over the interval $[-c,c]$), we appeal to Lemma \ref{LimitLemma} with $F(w) = \phi(w)$, $F_n(w)=\varphi(w+x_n)$, $\zeta_n = y_n$. This implies  $J_n(w)=\zeta_n + v(u^{-1}(w + \eta_n)) = y_n$ for any sequence $\eta_n$ since $v=0$; hence the residue term. Thus,
    \be
    \Phi^+(0)=\lim_{n\to \infty} \Phi(z_n)=\frac{1}{2}\varphi(0)+\frac{1}{2\pi i}{\rm{P.V.}}\int_{-1}^1 \frac{\varphi(u)}{u}du
    \ee
    The same argument applies to $\Phi^-(0)$, where the minus sign on the residue comes from the reversal of sign of $J_n$; see Lemma \ref{LimitLemma}. 
  
\end{enumerate}
Case-2: $\mathcal{C}$ is any general smooth curve \label{Gen} \\ 
\indent Plemelj's formulas are often used along curves in the complex domain.
For a general simple, smooth curve $\mathcal{C}$ we use the fact that $\overline{\mathcal{C}}\setminus \partial \mathcal{C}$  has the structure of a one dimensional, smooth, oriented manifold without boundary.  We note that orientatability follows from simplicity.  Given $t \in \overline{\mathcal{C}}\setminus \partial \mathcal{C}$, which we assume is equal to $0$ since we can always change coordinates by a translation.  Furthermore, we choose a parameterization $\psi:(-1,1)\to \overline{\mathcal{C}}\setminus \partial \mathcal{C}$ such that $\psi(0)=0$ and $\psi'(0)=1$.\\
A short calculation shows the equivalence between our definition of P.V. line integrals \eqref{eq:CauchyPV} and the standard P.V. integral on the real line when using a parameterization of $\mathcal{C}$
\begin{multline}\label{GenPV}
 \text{\rm{\rm{P.V.}}}\int_{\mathcal{C}}\frac{\varphi(s)}{s}ds=\lim_{\epsilon\to 0} \int_{\mathcal{C}\cap \DD_{\epsilon}^c} \frac{\phi(s)}{s} ds\\
     =\lim_{\epsilon\to 0}\left( \int_{-1}^{-c_1(\epsilon)}\frac{(\varphi\circ \psi)(\tau)}{\psi(\tau)}\psi'(\tau)d\tau +\int_{c_2(\epsilon)}^{1}\frac{(\varphi\circ \psi)(\tau)}{\psi(\tau)}\psi'(\tau)d \tau \right)
\end{multline}
where $-c_1(\epsilon),c_2(\epsilon)\in \psi^{-1}(\mathcal{C}\cap \partial \DD_{\epsilon})$ and $\epsilon$ is small enough to ensure the cardinality $|\mathcal{C}\cap \partial \DD_{\epsilon}|=2$ which is guaranteed by the smoothness of $\mathcal{C}$. Note that, $\psi(-c_1(\epsilon))$ is the point on the left of the origin in $\mathcal{C} \cap \partial \DD_{\epsilon}$ and $\psi(c_2(\epsilon))$ is on the right. Moreover, without loss of generality we assume $c_1(\epsilon),c_2(\epsilon)>0$ and $c_1(\epsilon)>c_2(\epsilon)$ then the right hand side of \eqref{GenPV} can be written 

\begin{multline}\label{ExistCurve}
    \lim_{\epsilon\to 0}\left\{\int_{-1}^{-c_1(\epsilon)}\frac{(\varphi\circ \psi)(\tau)}{\psi(\tau)}\psi'(\tau)d\tau +\int_{c_1(\epsilon)}^{1}\frac{(\varphi\circ \psi)(\tau)}{\psi(\tau)}\psi'(\tau)d \tau+\int_{c_2(\epsilon)}^{c_1(\epsilon)}\frac{(\varphi\circ \psi)(\tau)}{\psi(\tau)}\psi'(\tau)d \tau\right \}\\
    ={\rm{P.V.}}\int_{-1}^{1}\frac{(\varphi\circ \psi)(\tau)}{\psi(\tau)}\psi'(\tau)d\tau+  \lim_{\epsilon\to0}\int_{c_2(\epsilon)}^{c_1(\epsilon)}\frac{(\varphi\circ \psi)(\tau)}{\psi(\tau)}\psi'(\tau)d \tau
\end{multline}
The first term on the right hand side of \eqref{ExistCurve} can be seen to exist by writing 
$$\frac{(\varphi\circ \psi)(\tau)}{\psi(\tau)}\psi'(\tau)=\frac{(\varphi\circ \psi)(\tau)\psi'(\tau)}{\tau}\frac{\tau}{\psi(\tau)}$$
and appealing to Lemma \ref{DiniLemma} we see $(\varphi\circ \psi)(\tau)\psi'(\tau)$ is Dini.  If we can show the function $\tau/\psi(\tau)$ is continuously differentiable then again by Lemma \ref{DiniLemma} $(\varphi\circ \psi)(\tau)\psi'(\tau)\tau/\psi(\tau)$ will be Dini and an application of Lemma \ref{LimitLemma} {\it{(iii)}} with $J(\tau)=0$ proves the existence of $\displaystyle {\rm{P.V.}}\int_{-1}^{1}\frac{(\varphi\circ \psi)(\tau)}{\psi(\tau)}\psi'(\tau)d\tau$. We define 
$$h(\tau)=\begin{cases}
\frac{\tau}{\psi(\tau)} & \tau\neq 0\\
1 &  \tau=0
\end{cases}$$
Recalling that $\psi$ is the parameterization of our smooth and simple curve $\mathcal{C}$ implies that the only solution to the equation $\psi(\tau)=0$ is $\tau=0$. 
 Hence the only point of concern for $h$ is $\tau=0$ since it is a rational combination of smooth functions.  Furthermore, the assumed normalization  $\psi(0)=0$ and $\psi'(0)=1$ shows that $\lim_{\tau\to 0} h(\tau)=1$ (since this is the reciprocal of the difference quotient $(\psi(\tau)-\psi(0))/\tau$), proving the continuity of $h$ on $[-1,1]$.  To show differentiability, we differentiate away from the origin using the quotient rule and extend the derivative to $0$ by computing the difference quotient of $h$.  
 \begin{equation}\label{hPrime}
     h'(\tau)=
     \begin{cases}
          \frac{\psi(\tau)-\tau\psi'(\tau)}{\psi(\tau)^2} & \tau\neq 0\\[6pt]
          -\frac{\psi''(0)}{2} & \tau=0
     \end{cases}
 \end{equation}
 Again we see by inspection that the only point of concern in \eqref{hPrime} is $\tau=0$.  Using the normalization of $\psi$ we see that the conditions for twice applying  L'H\^{o}pital's rule are met. \footnote{After proper decomposition into real and imaginary part.}

 \begin{equation}
     \lim_{\tau \to 0} h'(\tau)=\lim_{\tau \to 0} \frac{\psi(\tau)-\tau\psi'(\tau)}{\psi(\tau)^2}=\lim_{\tau \to 0}\frac{\psi'(\tau)-\psi'(\tau)-\tau\psi''(\tau)}{2\psi'(\tau)\psi(\tau)}=\lim_{\tau \to 0}\frac{-\tau\psi''(\tau)}{2\psi'(\tau)\psi(\tau)}
 \end{equation}
 Once more we verify the conditions to use L'H\^{o}pital's rule and compute
\begin{equation}
    \lim_{\tau \to 0} h'(\tau)=\lim_{\tau \to 0}\frac{-\tau\psi''(\tau)}{2\psi'(\tau)\psi(\tau)}=\lim_{\tau \to 0}\frac{-\psi''(\tau)-\tau\psi^{(3)}(\tau)}{2\psi''(\tau)\psi(\tau)+2\psi'(\tau)^2}=-\frac{1}{2}\psi''(0).
\end{equation}


We consider $\displaystyle \int_{c_2(\epsilon)}^{c_1(\epsilon)}\frac{(\varphi\circ \psi)(\tau)}{\psi(\tau)}\psi'(\tau)d \tau$ as a function of $\epsilon$ and note that $[c_2(\epsilon),c_1(\epsilon)]$ (the region of integration) is disjoint from the origin.  Furthermore, by Dini continuity, the integrand is integrable all the way down to the origin.  Using the fundamental theorem of Lebesgue integrals \cite{Fol} the integration will evaluate to  $K(c_1(\epsilon))-K(c_2(\epsilon))$ where $K$ is an absolutely continuous function.  Taking the limit $\epsilon\to 0$ and using continuity of $c_j(\epsilon)$ tells us that 
$$\lim_{\epsilon\to0} \int_{c_2(\epsilon)}^{c_1(\epsilon)}\frac{(\varphi\circ \psi)(\tau)}{\psi(\tau)}\psi'(\tau)d \tau=0$$ 
implying existence.    

Additionally, we recover the relation 
\begin{equation}
    \text{\rm{\rm{P.V.}}}\int_{\mathcal{C}}\frac{\varphi(s)}{s}ds=\text{\rm{\rm{P.V.}}}\int_{-1}^1\frac{(\varphi\circ \psi)(\tau)}{\psi(\tau)}\psi'(\tau)d\tau
\end{equation}
from which we see that principal value integrals along curves are defined by principal value integrals over intervals.  One readily checks that this is independent of parameterization. \vspace{6pt} 

For \ref{SeqCurv} we proceed in a way similar to the earlier argument and assume without loss of generality that our parameterization is normalized so that $\psi(0)=0$ and $\psi'(0)=1$ .  Given $\{z_n\}_{n\in\NN}$ converging to $0$ from the left of the curve we first project the sequence onto the $\mathcal{C}$ along the imaginary axis.  In other words, we decompose $z_n=\lambda_n+i\mu_n$ with $\Re z_n=\Re \lambda_n$ and $\lambda_n\in\calC$.  Notice that projecting along $i\RR$ ensures $i\mu_n\in i\RR$ and furthermore, since we are assuming $\{z_n\}_{n\in\NN}$ is approaching from the left we have $\Im i\mu_n>0$ for all $n\in \NN$.   Since  $\lambda_n\in\calC$ we know there exists a unique $t_n\in[-1,1]$ such that $\psi(t_n)=\lambda_n$.  We now consider $\lim_{n\to\infty}\Phi(z_n)$ by writing it in a form analogous to the earlier argument.  First, choose $r>0$ so that the function $u(\tau)=\Re \psi(\tau)$ is invertible on the set $\psi^{-1}\big(\calC \cap \DD_r\big)$.  Such an $r$ exists since we are normalizing so that $u'(0)=1$. 

\begin{equation}
    2\pi i\Phi(z_n)=\int_{\mathcal{C}}\frac{\varphi(s)}{s-z_n}ds=\int_{\calC \cap \DD_r^c}\frac{\varphi(s)}{s-z_n}ds+\int_{\calC \cap \DD_r}\frac{\varphi(s)}{s-z_n}ds.
    \end{equation}
The integral over $\calC \cap \DD_r^c$ is nonsingular and the limit may be passed through the integral by dominated convergence theorem, yielding

\begin{equation}\label{GenPhin}
   2\pi i \lim_{n\to\infty}\Phi(z_n)=\int_{\calC \cap \DD_r^c}\frac{\varphi(s)}{s}ds+\lim_{n\to\infty}\int_{\calC \cap \DD_r}\frac{\varphi(s)}{s-z_n}ds.
\end{equation}

Recalling that an important aspect of the proof in the simpler case was uncovering an integral over a symmetric interval which resulted in the principal value integral.  We seek the same here, but first we rewrite the second term on the right hand side of  \eqref{GenPhin} in terms of the parameterization $\psi$. 
$ $\newline 
\indent Let $\psi^{-1}\big(\calC \cap \DD_r\big)=(-c_1,c_2)$ where $c_1,c_2>0$ from which we have 

\begin{equation}
    \int_{\calC \cap \DD_r}\frac{\varphi(s)}{s-z_n}ds=\int_{-c_1}^{c_2}\frac{(\varphi\circ\psi)(\tau)}{\psi(\tau)-z_n}\psi'(\tau)d\tau=\int_{-c_1}^{c_2}\frac{(\varphi\circ\psi)(\tau)}{\psi(\tau)-\psi(t_n)-i\mu_n}\psi'(\tau)d\tau.
\end{equation}
Writing $\psi(\tau)=u(\tau)+iv(\tau)$, and since $u$ is invertible near the origin we can make the change of variables $w=u(\tau)-u(t_n)$. 

We denote the new translated domain of integration as $E_n=[u(-c_1),u(c_2)]-u(t_n)$ and obtain 
\begin{equation} \label{GenCaseRed}
  \int_{E_n}\frac{1}{w-iJ_n(w)}H_n(w)dw.
\end{equation}
Where the sequence of functions $H_n(w)$ are defined by 
\begin{equation}
  H_n(w)=\frac{(\varphi\circ \psi)( u^{-1}(w+u(t_n)))}{u'(w+u(t_n))}\psi'(u^{-1}(w+u(t_n))).
\end{equation}
This sequence converges pointwise to the function 
\begin{equation}
    H(w)=\frac{(\varphi\circ \psi)( u^{-1}(w))}{u'(w)}\psi'(u^{-1}(w)).
\end{equation}

The term in the denominator of \eqref{GenCaseRed} is 

\begin{equation}
    J_n(w)=v(t_n)+\mu_n-v( u^{-1}(w+u(t_n))).
\end{equation}
A crucial property is that $J_n(w)>0$ for all $n\in\NN$ and all $w\in E_n$. This follows from the assumption that $\{z_n\}_n$ remains to the left of the curve for all $n$ and noticing that $v(t_n)+\mu_n=\Im z_n$ which must be larger than the imaginary part of any point along the curve for sufficiently small $r$.



We pick $c>0$ depending on $r$ and $n$ such that $[-c,c]\subset E_n$ for all $n$ and write $E_n=E_n\setminus [-c,c]\cup [-c,c]$.  
Hence we have 
\begin{equation} \label{GenCaseRed2}
  \int_{E_n}\frac{1}{w-iJ_n(w)}H_n(w)dw=\int_{E_n\setminus [-c,c]}\frac{1}{w-iJ_n(w)}H_n(w)dw+\int_{-c}^c\frac{1}{w-iJ_n(w)}H_n(w)dw.
\end{equation}
The final step is to let $n\to \infty$ and then $r\to 0$.  
We see that the denominator $w-iJ_n(w)$ can only vanish at the origin since the sequence of functions $\{J_n\}_n$ is real valued and moreover, it can only vanish in the limit $n\to \infty$. Since the set $E_n\setminus [-c,c]$ is disjoint from the origin, we use dominated convergence and obtain 

\begin{equation} \label{GenCaseLim1}
    \lim_{r\to 0}\lim_{n\to \infty}\int_{E_n\setminus[-c,c]}\frac{1}{w-iJ_n(w)}H_n(w)dw
     =\lim_{r\to 0}\int_{[u(-c_1),u(c_2)]\setminus [-c,c]}\frac{1}{w-iv(u^{-1}(w))}H(w)dw.
\end{equation}

Applying Lemma \ref{DiniLemma}, we see $H_n(w)$ are Dini continuous and converge pointwise to the Dini function $H(w)$.   This implies the integrand on the right side of \eqref{GenCaseLim1} is $L_{\text{loc}}^1$ in a neighborhood about the origin. Therefore, the integration will yield a pair of differences of an $AC$ function evaluated at the bounds \cite{Fol}.  We note that by normalization and a Taylor estimate, $u(-c_1),u(c_2)=o(1)$ as $r\to 0$. Since $0<c<u(c_2)$ we see the same is true for $c$ and therefore the limit $r\to 0$ of \eqref{GenCaseLim1} will be $0$.

Finally, we easily check that second integral in \eqref{GenCaseRed2} meets the conditions of Lemma \ref{LimitLemma} and we obtain    

\begin{equation}
    \lim_{r\to 0}\lim_{n\to \infty}\int_{-c}^c\frac{1}{w-iJ_n(w)}H_n(w)dw=\pi iH(0)=\pi i (\varphi\circ \psi)(0)=\pi i \varphi(0).
\end{equation}

Therefore from \eqref{GenPhin} we have

\begin{equation}
    \Phi^+(0)=\lim_{r \to 0}\lim_{n\to\infty}\Phi(z_n)=\frac{1}{2}\varphi(0)+\frac{1}{2\pi i}\lim_{r \to 0}\int_{\calC \cap \DD_r^c}\frac{\varphi(s)}{s}ds =\frac{1}{2}\varphi(0)+\frac{1}{2\pi i}\text{{\rm{P.V.}}}\int_{\calC }\frac{\varphi(s)}{s}ds
\end{equation}
and the proof is complete. 

\end{proof}
\subsection{Proof of Lemmas}

\begin{proof} [Proof of Lemma \ref{DiniLemma}]
$  $ \newline
\indent Fix $t>0$. Let $t_1,t_2 \in [a,b]$ such that $|t_1-t_2|\leq t$. Consider
\begin{equation} 
\label{compositionOne}
    |\phi \circ \psi (t_1) - \phi \circ \psi (t_2)| = |\phi(\psi(t_1)) - \phi(\psi (t_2))| \leq \omega_{\phi}(|\psi(t_1)-\psi(t_2)|).
\end{equation}
We know that $\displaystyle |\psi(t_2)-\psi(t_1)| = \left|\int_{t_1}^{t_2}\psi'(t)dt\right|\leq \int_{t_1}^{t_2}|\psi'(t)|dt $. Applying the mean value theorem for integrals, there exists $\eta \in (t_1,t_2)$ such that 
\begin{align*}
    |\psi'(\eta)| &= \dfrac{1}{t_2-t_1} \int_{t_1}^{t_2} |\psi'(t)|dt \\
    |\psi(t_1)-\psi(t_2)| &\leq |\psi'(\eta)| |t_2-t_1|.
\end{align*}
Since $\psi$ is continuously differentiable on a closed interval $[a,b]$, therefore, $\psi'$ is bounded, say, by $M$. Without loss of generality, we can assume $M>1$. Therefore,
\begin{equation}
\label{compositionTwo}
  |\psi(t_1)-\psi(t_2)| \leq M |t_1-t_2| \leq Mt.  
\end{equation}
Using \eqref{compositionOne} and \eqref{compositionTwo} and the fact that the modulus of continuity is a non-decreasing function, we get
\begin{equation*}
   |\phi \circ \psi (t_1) - \phi \circ \psi (t_2)| \leq  \omega_{\phi}(|\psi(t_1)-\psi(t_2)|) \leq \omega_{\phi} (Mt)
\end{equation*}
i.e.,
\begin{equation*}
    \omega_{\phi \circ \psi }(t) \leq \omega_{\phi} (Mt).
\end{equation*}
Since $t > 0$ is arbitrary and the above is true for $t=0$ trivially, therefore $\omega_{\phi \circ \psi }(t) \leq \omega_{\phi} (Mt)$ is true for all $t \geq 0$. \\
Now,
\begin{align*}
    \int_{0}^{1} \dfrac{\omega_{\phi \circ \psi }(t)}{t} dt &\leq \int_{0}^{1} \dfrac{\omega_{\phi}(Mt)}{t} dt = \int_{0}^{M} \dfrac{\omega_{\phi}(u)}{u} du \\
    & \leq \int_{0}^{1} \dfrac{\omega_{\phi}(u)}{u} du + \int_{1}^{M} \dfrac{\omega_{\phi}(u)}{u} du .
\end{align*}
Since $\phi$ is Dini continuous, we have
\begin{equation*}
    \int_{0}^{1} \dfrac{\omega_{\phi}(u)}{u} du < \infty.
\end{equation*}
Also, $\phi$ is Dini continuous implies that $\phi$ is continuous and therefore bounded on $K$. Let us say, $|\phi(z)| \leq M_1$ for all $z \in K$. So,
\begin{equation*}
  \int_{1}^{M} \dfrac{\omega_{\phi}(u)}{u} du \leq \int_{1}^{M} \dfrac{\omega_{\phi}(M)}{u} du \leq 2 M_1 \ln(M)  < \infty.
\end{equation*}
Hence proved that $\phi \circ \psi$ is a Dini continuous function. \\
For fixed $t \geq 0$, consider $z_1,z_2 \in K$ such that $|z_1-z_2| \leq t$.
\begin{align*}
    |\chi \phi (z_1) - \chi \phi (z_2)| 
    &= |\chi(z_1)\phi(z_1) - \chi(z_1)\phi(z_2) + \chi(z_1) \phi(z_2) - \chi(z_2) \phi(z_2)| \\
    & \leq |\chi(z_1)| \omega_{\phi}(t) + |\phi(z_2)||\chi(z_1) - \chi(z_2)|
\end{align*}
Since $\chi$ is a continuous function on the compact set $K$ it is bounded on $K$, say by $M_2$. So,
\begin{equation}
\label{productOne}
    |\chi \phi (z_1) - \chi \phi (z_2)| \leq M_2 \omega_{\phi}(t) + M_1|\chi(z_1) - \chi(z_2)|.
\end{equation}
Since $\chi$ is continuously differentiable, its difference quotient will be continuous and therefore bounded when restricted to the compact set $K$.  Let the bound of the difference quotient over $K$ be denoted by $M_3$ from which we have the following estimate:  
\begin{equation*}
    |\chi(z_1) - \chi (z_2)| \leq M_3 |z_1-z_2|\leq M_3 t
\end{equation*}

So, \eqref{productOne} becomes
\begin{equation*}
    |\chi \phi (z_1) - \chi \phi (z_2)| \leq M_2 \omega_{\phi}(t) + M_1 M_3 t.
\end{equation*}
Since $z_1$, $z_2$ are arbitrary points in $K$ such that $|z_1-z_2|\leq t$, therefore, $\omega_{\chi \phi}(t) \leq M_2 \omega_{\phi}(t) + M_1 M_3 t$. Now, using the fact that $\phi$ is a Dini continuous function, we have that $t^{-1} \omega_{\chi \phi}(t) \in L^{1}[0,1]$ i.e. $\chi \phi$ is Dini continuous. 
\end{proof}

\begin{proof} [Proof of Lemma \ref{LimitLemma}]
\begin{enumerate}[label=(\roman*)]
    \item  Without loss of generality, assume $J_n>0$. We write 
\begin{equation}
    \frac{F_n(w)}{w-iJ_n(w)}=\frac{F_n(w)-F_n(0)}{w-iJ_n(w)}+\frac{F_n(0)}{w-iJ_n(w)}.
\end{equation}
Since $J_n(w)$ are real valued, we have $|w|\leq |w-iJ_n(w)|$. Since $F_n$ are Dini, therefore, $F_n(w)-F_n(0)$ are bounded by $\omega_F(|w|)$, the dominated convergence theorem applies therefore giving 

\begin{equation}\label{FirstInt}
    \lim_{n\to \infty}\int _{-c}^c\frac{F_n(w)-F_n(0)}{w-iJ_n(w)}dw=\int _{-c}^c\frac{F(w)-F(0)}{w-iG(w)}dw
\end{equation}
Since the right integrand of \eqref{FirstInt} is $L^1$, we know that as a function of the bound $c$, the integration results in the difference $K(c)-K(-c)$ where $K\in AC([-c,c])$ \cite{Fol}.  In the limit $c\to 0$, this difference also shrinks to zero since absolute continuity implies continuity. 

To obtain the last term, before taking either limit, we deform the contour $[-c,c]$ into  $\gamma_c$ a semi circle in the lower half plane of radius $c$. We evaluate the integral along the deformed contour by first taking the limit $n\to \infty$, using dominated convergence and then parameterizing $\gamma_c$ by $w=c\rm{e}^{it}$ for $t\in[-\pi,0]$.  Additionally, using the hypotheses we know $\lim_{c\to 0}\frac{G(ce^{it})}{ce^{it}}=0$.   The computation results in 

\begin{multline}
    \lim_{c\to 0}\lim_{n\to \infty}\int_{\gamma_c}\frac{F_n(0)}{w-iJ_n(w)}dw\\
    =  \lim_{c\to 0} \int_{-\pi}^0\frac{F(0)}{ce^{it}-ice^{it}\frac{G(ce^{it})}{ce^{it}}}ice^{it}dt=iF(0)\lim_{c\to 0}\int_{-\pi}^0\frac{1}{1-i\frac{G(ce^{it})}{ce^{it}}}dt=\pi i F(0)
\end{multline}

Therefore  \eqref{LemmaEq} holds. 

\item \label{Lemma9ii}
Estimating \eqref{LemmaGenExists} in absolute value, we obtain 
 \begin{multline}\label{LemAbsPV}
     \left |{\rm{P.V.}}\int_{-c}^c \frac{F(w)}{w-iJ(w)}dw\right |=\left|  \lim_{\epsilon\to 0} \int_{[-c,c]\cap \DD_{\epsilon}^c} \frac{F(w)-F(0)}{w-iJ(w)} dw +\int_{[-c,c]\cap \DD_{\epsilon}^c} \frac{F(0)}{w-iJ(w)} dw\right |\\ 
     \leq \lim_{\epsilon\to 0} \int_{[-c,c]\cap \DD_{\epsilon}^c} \frac{\omega_F(|w|)}{|w-iJ(w)|} dw+  \lim_{\epsilon\to 0} \left |\int_{[-c,c]\cap \DD_{\epsilon}^c} \frac{F(0)}{w-iJ(w)} dw\right |
 \end{multline}

Since $J$ is real valued we know $|w-iJ(w)|\geq|w|$ and Dini continuity of $F$ implies existence of the first  limit on the right hand side of \eqref{LemAbsPV}.

For the last term in \eqref{LemAbsPV} we write 

\begin{equation}
    \frac{1}{w-iJ(w)}=\left (\frac{1}{w-iJ(w)}-\frac{1}{w}\right)+\frac{1}{w}=\frac{iJ(w)/w}{w-iJ(w)}+\frac{1}{w}
\end{equation}
Writing $\displaystyle J(w)/w=\int_0^1J'(sw)ds$ and using the Hölder continuity of $J'$ we have the estimate $|J(w)/w|\leq C|w|^\alpha$ with $\alpha\in(0,1]$. Again using the estimate $|w-iJ(w)|\geq|w|$ we therefore have 

\begin{equation}\label{Lem9LastEq}
    \lim_{\epsilon\to 0} \left |\int_{[-c,c]\cap \DD_{\epsilon}^c} \frac{F(0)}{w-iJ(w)} dw\right |\leq |F(0)| \left\{ \lim_{\epsilon\to 0} \left | \int_{[-c,c]\cap \DD_{\epsilon}^c}\frac{1}{w}dw\right|+\lim_{\epsilon\to 0}\int_{[-c,c]\cap \DD_{\epsilon}^c}C|w|^{\alpha-1}dw\right\}.
\end{equation}
The first integral is one of principal value and equal to zero by symmetry and the second exists since $\alpha\in (0,1]$. 
\item Starting from \eqref{LemAbsPV} and considering this as a function of $c$, we follow the equality and inequality chains.  At the end of the two chains, the integrals that do not vanish i.e. the first term in the right most inequality of \eqref{LemAbsPV} and the last term on the right side of \eqref{Lem9LastEq} both have integrands that belong to $L^1[-1,1]$, by the fundamental theorem of Lebesgue integrals \cite{Fol}  these integrals are absolutely continuous functions of $c$  on $[0,1]$.  Therefore the limit $c\to 1$ exists and the argument is equivalent to \ref{Lemma9ii} of this Lemma.

\end{enumerate}

\end{proof}


\begin{thebibliography}{}
\bibitem{AblFok} M.J. Ablowitz, A.S. Fokas, 
Complex Variables : Introduction and Applications Second Edition, 
Cambridge : Cambridge University Press, (2003).

\bibitem{Conway} J.B. Conway, Functions of One Complex Variable II, Springer New York, (1997).



\bibitem{Both}T. Bothner. On the Origins of Riemann Hilbert Problems in Mathematics, Volume 34 Nonlinearity, IOP Publishing, http://dx.doi.org/10.1088/1361-6544/abb543.

 
\bibitem{CosFreid} Costin, O. and Friedman, H. M. (2014). Foundational aspects of singular integrals. Journal of Functional Analysis, 267(12), 4732-4752. https://doi.org/10.1016/j.jfa.2014.09.005.
\bibitem{Ev} L.C. Evans, Partial differential equations, Graduate studies in mathematics. American Mathematical Society (1998). 
\bibitem{RHPainlevé?} A.S. Fokas, A.R. Its, A.A. Kapaev, V.Y. Novokshenov, Painlev\'e transcendents: the Riemann-Hilbert approach, American Mathematical Society, Providence, R.I., (2006).
\bibitem{Fol} G.B. Folland, Real analysis : modern techniques and their applications, Wiley, New York, (1999).

\bibitem{BAF} J.B. Garnett, Bounded Analytic Functions, Graduate Texts in Mathematics, https://doi.org/10.1007/0-387-49763-3, Springer-Verlag New York (2007).


\bibitem{SmMan} J. Lee, Introduction to Smooth Manifolds, Springer Science and Business Media, (2003).
\bibitem{Muskhel} N.I. Muskhelishvili, Singular Integral Equations: Boundary problems of functions theory and their applications to mathematical physics, Springer, Dordrecht Wolters-Noordhoff Publishing (1958).

\bibitem{SteinandS} E.M.Stein, and R. Shakarchi. Functional Analysis: Introduction to Further Topics in Analysis. Princeton University Press, 2011. JSTOR, https://doi.org/10.2307/j.ctvcm4hpw. Accessed 26 Oct. 2023.
\bibitem{Zy}A. Zygmund, Trigonometric series  , Cambridge Univ. Press (1988).


\end{thebibliography}
\end{document}